\newtheorem{theorem}{Theorem}
\newtheorem{proposition}{Proposition}
\newtheorem{corollary}{Corollary}
\newtheorem{lemma}{Lemma}
\theoremstyle{remark}
\newtheorem{remark}{Remark}
\newtheorem{example}{Example}
\newtheorem*{acknowledgement}{Acknowledgements}
\newcommand{\an}{\mathcal{O}}
\newcommand{\p}{\mathbb {P}}
\newcommand{\g}{\mathbb {G}}
\newcommand{\length}{\operatorname{length}}
\newcommand{\Supp}{\operatorname{Supp}}
\newcommand{\charac}{\operatorname{char}}
\begin{document}

\title{A degree bound for globally generated vector bundles}

\author{Jos\'e Carlos Sierra}

\address{\hskip -.43cm Jos\'e Carlos Sierra, Departamento de \'Algebra, Facultad de Ciencias Matem\'aticas, Universidad
Complutense de Madrid, 28040 Madrid, Spain. e-mail {\tt
jcsierra@mat.ucm.es}}

\thanks{Research partially supported by MCYT project MTM2006-04785 and by the program ``Profesores de la UCM en el extranjero. Convocatoria 2006".}
\thanks{\emph{2000 Mathematics Subject Classification}: Primary 14F05, 14N25. Secondary 14M15}

\begin{abstract}
Let $E$ be a globally generated vector bundle of rank $e\geq 2$
over a reduced irreducible projective variety $X$ of dimension $n$ defined over an
algebraically closed field of characteristic zero. Let
$L:=\det(E)$. If $\deg(E):=\deg(L)=L^n>0$ and $E$ is not
isomorphic to $\an_X^{e-1}\oplus L$, we obtain a sharp bound
$$\deg(E)\geq h^0(X,E)-e$$ on the degree of $E$, proving also that
$\deg(L)=h^0(X,L)-n$ if equality holds. As an application, we
obtain a Del Pezzo-Bertini type theorem on varieties of minimal
degree for subvarieties of Grassmannians, as well as a bound on
the sectional genus for subvarieties $X\subset\g(k,N)$ of degree
at most $N+1$. 
\end{abstract}

\maketitle

\section{Introduction}
\label{intro} Let $X$ be a reduced irreducible projective variety
of dimension $n$ defined over an algebraically closed field $k$ of
characteristic zero.

If $X\subset\p^N$ is a non-degenerate embedding (i.e. not contained
in any hyperplane) then $$\deg(X)\geq N+1-n,$$ and $X\subset\p^N$ is
said to be a variety of minimal degree if equality holds. These
varieties were classified by Del Pezzo \cite{dp} for $n=2$, and
Bertini \cite{b} for $n\geq 3$ (see also \cite{eh} for a modern
account). A variety of minimal degree turns out to be either a
linear space, or a quadric hypersurface, or a rational normal
scroll, or (a cone over) the Veronese surface in $\p^5$.

More generally, if $L$ is a globally generated line bundle on $X$
and $\deg(L):=L^n > 0$, then $$\deg(L)\geq h^0(X,L)-n,$$ and, if
equality holds, then $\varphi_{|L|}:X\to
X'\subset\p^{h^0(X,L)-1}$ is a birational morphism onto a variety
of minimal degree, so in particular $X$ is rational.

In this note, a similar bound is obtained for a globally generated
vector bundle $E$ of rank $e\geq 2$ over $X$ such that
$\deg(E):=\deg(L)=L^n>0$, where $L:=\det(E)$. If
$E\cong\an_{X}^{e-1}\oplus L$ then
\[
\deg(E)=\deg(L)\geq h^0(X,L)-n=h^0(X,E)-e+1-n.
\]
Therefore, we assume $E$ different from $\an_{X}^{e-1}\oplus L$ in
the sequel. The main result of the paper is the following:

\begin{theorem}\label{thm:main}
Let $E$ be a globally generated vector bundle of rank $e\geq 2$ on
$X$ and let $L:=\det(E)$. If $\deg(E)>0$ and $E$ is not isomorphic
to $\an_X^{e-1}\oplus L$, then
\[
\deg(E)\geq h^0(X,E)-e.
\]
Moreover, if equality holds then $\deg(L)=h^0(X,L)-n$ and, in
particular, $X$ is rational.
\end{theorem}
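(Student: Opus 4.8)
The plan is to induct on $n=\dim X$, using general members of $|L|$ to cut $X$ down to a curve. For $n=1$ I would take $e-1$ general global sections of $E$: since the locus on which $e-1$ general sections of a rank-$e$ bundle become linearly dependent has expected codimension $2$, over a curve these sections span a rank-$(e-1)$ subbundle, giving an exact sequence $0\to\an_X^{e-1}\to E\to M\to 0$ with $M$ globally generated, $M\cong\det E=L$, and $\deg M=\deg E>0$. The line-bundle bound recalled in the introduction gives $\deg M\ge h^0(M)-1$, while the sequence gives $h^0(E)\le(e-1)+h^0(M)$, so $\deg E\ge h^0(E)-e$; if equality holds then $\deg L=h^0(L)-1$, so that $\varphi_{|L|}$ maps $X$ birationally onto a rational normal curve and $X\cong\p^1$.

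For $n\ge 2$, note that $L=\det E$ is globally generated (it is a quotient of $\bigwedge^e$ of a trivial bundle) and big (as $L^n>0$), so $\varphi_{|L|}$ has $n$-dimensional image and, since we work in characteristic zero, a general $Y\in|L|$ is a reduced irreducible variety of dimension $n-1$ by Bertini. Then $E_Y:=E|_Y$ is globally generated of rank $e$ with $\det E_Y=L_Y:=L|_Y$ and $\deg E_Y=L^n=\deg E>0$. To transfer the bound from $E_Y$ back to $E$ I would establish (i) $H^0(X,E\otimes L^{-1})=0$, so that $0\to E\otimes L^{-1}\to E\to E_Y\to 0$ gives $h^0(X,E)\le h^0(Y,E_Y)$, and (ii) $E_Y\not\cong\an_Y^{e-1}\oplus L_Y$, so that the inductive hypothesis applies to $E_Y$; granting both, $\deg E=\deg E_Y\ge h^0(Y,E_Y)-e\ge h^0(X,E)-e$.

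For (i): a nonzero homomorphism $L\to E$ is injective ($L$ is invertible and $E$ torsion-free), and the saturation $M\subseteq E$ of its image has a globally generated torsion-free quotient $Q:=E/M$ of rank $e-1$ with $c_1(Q)=c_1(L)-c_1(M)=-D$ for an effective divisor $D$ (since $c_1(M)\ge c_1(L)$). But $c_1$ of a globally generated torsion-free sheaf is nef---it has nonnegative degree on every curve---and $-D$ is nef only for $D=0$, so $c_1(Q)=0$. A globally generated torsion-free sheaf with trivial $c_1$ is free: one inducts on the rank, the rank-$1$ case being a globally generated $\ii_W$, which forces $W=\emptyset$ (as $h^0(\ii_W)=0$ otherwise), while the intervening extensions split because a globally generated extension of a trivial sheaf by a trivial sheaf is split. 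Hence $Q\cong\an_X^{e-1}$, the sequence $0\to M\to E\to\an_X^{e-1}\to 0$ splits for the same reason, and $M\cong\det E=L$, contradicting the hypothesis on $E$.

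The main obstacle is (ii): the non-splitting of $E$ must descend to the general section $E_Y$. I would try to show that if $E_Y\cong\an_Y^{e-1}\oplus L_Y$ for general $Y\in|L|$, then the $e-1$ ``trivial directions'' of $E_Y$, as $Y$ varies, fit together into a subsheaf $\an_X^{e-1}\subseteq E$ whose degeneracy locus is finite---by an incidence--dimension count against the linear system $|L|$---so that $0\to\an_X^{e-1}\to E\to\ii_Z\otimes L\to 0$ with $\operatorname{codim}Z\ge 2$; then the splitting argument of (i) forces $Z=\emptyset$ and $E\cong\an_X^{e-1}\oplus L$, a contradiction. Making the lifting of those directions from $Y$ to $X$ precise---i.e. controlling $H^0(X,E)\to H^0(Y,E_Y)$---is where the real difficulty lies; for $n=2$ it can be sidestepped, since even $E_C\cong\an_C^{e-1}\oplus L_C$ on the curve $C=Y$ still gives $\deg E=\deg L_C\ge h^0(C,L_C)-1=h^0(C,E_C)-e\ge h^0(X,E)-e$. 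Finally, for the equality clause: if $\deg E=h^0(X,E)-e$ then tracing this back through the reduction forces every inequality above to be an equality; restricting to a general complete-intersection curve $C\subset X$ cut by members of $|L|$ then yields $\deg L_C=h^0(C,L_C)-1$ with $L_C=\det(E|_C)$, so $C\cong\p^1$ by the equality case of the line-bundle bound, and the forced surjectivity of the restriction maps on global sections propagates $\deg L=h^0(X,L)-n$ back to $X$, whence $\varphi_{|L|}$ is birational onto a variety of minimal degree and $X$ is rational.
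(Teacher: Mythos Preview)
Your overall architecture matches the paper's: reduce to a curve by cutting with members of $|L|$, and at each step use the vanishing $H^0(X,E\otimes L^{-1})=0$ (your (i)) to control sections under restriction. Your proof of (i) is correct and is exactly the paper's Proposition~\ref{prop:key}; your $n=1$ argument via a generic trivial subbundle $\an_C^{e-1}\hookrightarrow E$ is correct and in fact slightly slicker than the paper's induction on rank.

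The genuine gap is (ii). For $n\ge 3$ you need to iterate (i), and for that you need $E_{Y_1}$ not to be $\an_{Y_1}^{e-1}\oplus L_{Y_1}$; your proposal to ``glue the trivial directions of $E_Y$'' as $Y$ varies is, as you say, not made precise, and in fact need not be true: the paper does \emph{not} claim $E_Y$ is never split. What the paper does instead is apply your own step (i) to a different bundle. Let $s=h^0(X,E)$ and let $G=\ker(\an_X^s\twoheadrightarrow E)$; then $G^*$ is globally generated of rank $s-e$ with $\det G^*\cong L$. A short diagram chase shows that if $h^0(Y,E_Y^*)>h^0(X,E^*)$ for some $Y\in|L|$ then $h^0(X,G^*\otimes L^{-1})\ne 0$, so by (i) applied to $G^*$ one gets $G^*\cong\an_X^{s-e-1}\oplus L$, i.e.\ $G\cong\an_X^{s-e-1}\oplus L^{-1}$; but $h^0(X,G)=0$, forcing $s=e+1$. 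Thus either $h^0(X,E)=e+1$, in which case $\deg(E)\ge 1=h^0(X,E)-e$ is automatic, or $h^0(X,E^*)=h^0(Y,E_Y^*)$, which is exactly what you need to know that $E_Y\not\cong\an_Y^{e-1}\oplus L_Y$ and to iterate. This is the missing idea; once you have it, your induction closes.

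Your equality clause is also not yet complete. Equality forces $h^0(X,E)=h^0(C,E_C)$ and $C\cong\p^1$, but this is information about $E$, not directly about $L$: surjectivity of $H^0(X,E)\to H^0(C,E_C)$ does not by itself give surjectivity of $H^0(X,L)\to H^0(C,L_C)$, so ``propagating $\deg L=h^0(X,L)-n$ back to $X$'' needs an argument. The paper passes through the Pl\"ucker embedding: since $E_C$ is spanned, $\bigwedge^e H^0(C,E_C)\to H^0(C,L_C)$ is surjective, so the image of $C$ under $X\to\g(e-1,h^0(X,E)-1)\hookrightarrow\p^M$ is a rational normal curve spanning a $\p^{\deg(E)}$; hence $X\subset\p^M$ spans a $\p^{\deg(E)+n-1}$, which forces $h^0(X,L)\ge\deg(L)+n$ and therefore equality.
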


We remark that this bound does not depend on the dimension of $X$,
as one might expect a priori.

\smallskip

For general Cartier divisors $Y_1,\dots,Y_{n-1}\in |L|$, let
$C:=\cap_{i=1}^{n-1} Y_i$ denote the curve section (consider $C=X$
for $n=1$) and let $p_a(C)$ denote its arithmetic genus. As a
byproduct of the proof of Theorem \ref{thm:main}, we also obtain
the following result:

\begin{corollary}\label{cor:main}
Let $E$ be a globally generated vector bundle of rank $e\geq 2$ on
$X$ and let $E^*$ denote its dual bundle. Assume $h^0(X,E)>e+1$
for $n\geq 2$. If $\deg(E)>0$ and $E$ is not isomorphic to
$\an_X^{e-1}\oplus L$, then:
\begin{enumerate}
\item[(i)] if $\deg(E)<h^0(X,E)-h^0(X,E^*)$ then $C\cong\p^1$. Moreover, if $X$ is normal then $\deg(L)=h^0(X,L)-n$ and $X$ is rational;
\item[(ii)] if $\deg(E)=h^0(X,E)-h^0(X,E^*)$ then either $p_a(C)\leq 1$, or $C$ is
hyperelliptic and the restriction of $E$ to $C$ satisfies
$E_C\cong\an_C^r\oplus g^1_2\oplus\dots\oplus g^1_2$, where
$r:=h^0(C,E_C^*)$ and $g^1_2$ denotes the pencil of degree $2$ on
$C$.
\end{enumerate}
\end{corollary}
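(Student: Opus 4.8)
The plan is to restrict to the general curve section and turn everything into a Clifford-type statement there. By the reduction already carried out in the proof of Theorem~\ref{thm:main}, for general $Y_1,\dots,Y_{n-1}\in|L|$ the curve $C=\cap_{i=1}^{n-1}Y_i$ is reduced and irreducible, $E_C$ is globally generated, $L_C:=\det(E_C)$ is globally generated, $\deg(E_C)=\deg(L_C)=\deg(E)=:d>0$, and (being a reduced curve) $C$ is Cohen--Macaulay, so Serre duality holds with the dualizing sheaf $\omega_C$. The cohomological comparisons of that proof (using $0\to\mathcal F\otimes L^{-1}\to\mathcal F\to\mathcal F_{Y_1}\to0$ for $\mathcal F=E,E^*$ together with $h^0(X,E^*\otimes L^{-j})=0$ for $j\geq1$, the latter because a globally generated subsheaf of a negative line bundle vanishes) translate the hypotheses and conclusions into statements about $E_C$; in particular $\deg(E)\leq h^0(X,E)-h^0(X,E^*)$ becomes $d\leq h^0(C,E_C)-h^0(C,E_C^*)$, and $h^0(X,E^*)\leq e-1$ throughout. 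Thus it suffices to prove on the integral curve $C$, with $g:=p_a(C)$, that $h^0(C,E_C)-h^0(C,E_C^*)\leq d$ whenever $g\geq1$ (with equality as in (ii)), while $g=0$ forces $C\cong\p^1$.

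The engine on $C$ is the Serre--duality identity: since $\chi(E_C)-\chi(E_C^*)=2d$ and $h^1(E_C)=h^0(E_C^*\otimes\omega_C)$, $h^1(E_C^*)=h^0(E_C\otimes\omega_C)$,
\[
h^0(C,E_C)-h^0(C,E_C^*)=2d+h^0(C,E_C^*\otimes\omega_C)-h^0(C,E_C\otimes\omega_C),
\]
so the required bound is equivalent to $h^0(C,E_C\otimes\omega_C)-h^0(C,E_C^*\otimes\omega_C)\geq d$. When $g=1$, $\omega_C\cong\an_C$ and the identity collapses to $h^0(C,E_C)-h^0(C,E_C^*)=d$ identically, so (i) never occurs and (ii) holds with $p_a(C)=1$. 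For $g\geq2$ I would use the sequence produced by $e-1$ general sections of $E_C$, namely $0\to\an_C^{e-1}\to E_C\to L_C\to0$ (the locus where $e-1$ general sections degenerate has expected codimension $2$, hence misses $C$), and its dual $0\to L_C^{-1}\to E_C^*\to\an_C^{e-1}\to0$; twisting both by $\omega_C$, taking cohomology and applying Riemann--Roch, the inequality reduces to a lower bound on the rank of the cup-product map $H^0(C,\omega_C)^{e-1}\to H^1(C,\omega_C\otimes L_C^{-1})$ determined by the extension class $(\xi_1,\dots,\xi_{e-1})\in H^1(C,L_C^{-1})^{e-1}$, equivalently on the image of the multiplication map $H^0(C,L_C)\otimes H^0(C,\omega_C)\to H^0(C,\omega_C\otimes L_C)$ paired against the $\xi_i$. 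Here Clifford's inequality $h^0(C,L_C)\leq\tfrac d2+1$ for special $L_C$ (and $h^0(C,L_C)\leq d+1$ in general) is used, while $E_C\not\cong\an_C^{e-1}\oplus L_C$ guarantees that the $\xi_i$ are not all zero.

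Granting the bound, part~(i) is immediate: the hypothesis gives $d<h^0(C,E_C)-h^0(C,E_C^*)$, impossible for $g\geq1$, so $g=0$ and an integral curve of arithmetic genus $0$ is $\cong\p^1$; if $X$ is normal the general curve section is smooth, so $C\cong\p^1$ forces, exactly as in the equality case of Theorem~\ref{thm:main}, $\varphi_{|L|}$ to be a birational morphism onto a variety of minimal degree, whence $\deg(L)=h^0(X,L)-n$ and $X$ is rational. For part~(ii) the hypothesis forces equality in the bound; by the analysis above, for $g\geq2$ this can only happen if $L_C$ attains Clifford's bound, i.e. $L_C$ is a multiple of a $g^1_2$ and $C$ is hyperelliptic. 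In that case equality propagates through $0\to\an_C^{e-1}\to E_C\to L_C\to0$ and through the maximal trivial-quotient sequence $0\to E_C'\to E_C\to\an_C^{r}\to0$ with $r=h^0(C,E_C^*)$ (where $E_C'$ is again globally generated with $\det(E_C')=L_C$ and $h^0(C,E_C')=h^0(C,E_C)-r$); a degree count shows that the only globally generated bundle with determinant a power of $g^1_2$ that can occur this way is $\an_C^{r}\oplus(g^1_2)^{\oplus(e-r)}$, giving the stated form of $E_C$.

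The genuine difficulty, as I see it, is the inequality $h^0(C,E_C\otimes\omega_C)-h^0(C,E_C^*\otimes\omega_C)\geq d$ for $g\geq2$ and, above all, its equality case: one must bound the rank of the relevant multiplication/cup-product maps on a possibly singular (and possibly non-Gorenstein) integral curve, so Clifford's theorem and Serre duality have to be invoked in the form valid for the dualizing sheaf $\omega_C$, and then one must exploit the rigidity of determinant-constrained globally generated bundles on hyperelliptic curves to pin $E_C$ down exactly. The reduction from $X$ to $C$ preserving the precise (in)equalities is the other delicate point, but it is already available from the proof of Theorem~\ref{thm:main}.
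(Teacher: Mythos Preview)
Your reduction from $X$ to $C$ has a genuine gap. From the short exact sequence $0\to E^*\otimes L^{-1}\to E^*\to E^*_{Y}\to 0$ and the vanishing $h^0(X,E^*\otimes L^{-1})=0$ you only obtain the \emph{injection} $H^0(X,E^*)\hookrightarrow H^0(Y,E^*_{Y})$, i.e.\ $h^0(X,E^*)\leq h^0(C,E^*_C)$. This inequality points the wrong way: from $\deg(E)<h^0(X,E)-h^0(X,E^*)$ and $h^0(X,E)\leq h^0(C,E_C)$ you get $\deg(E_C)<h^0(C,E_C)-h^0(X,E^*)$, but you need $h^0(C,E^*_C)\leq h^0(X,E^*)$ to conclude $\deg(E_C)<h^0(C,E_C)-h^0(C,E^*_C)$. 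The paper proves the \emph{equality} $h^0(X,E^*)=h^0(C,E^*_C)$ (Corollary~\ref{cor:curve}(ii)) via Lemma~\ref{lem:e+1}: one passes to the kernel bundle $G$ in $0\to G\to\an_X^s\to E\to 0$, dualizes, and shows by a diagram chase that $h^0(Y,E^*_Y)>h^0(X,E^*)$ forces $h^0(X,G^*\otimes L^{-1})\neq 0$, whence $G^*\cong\an_X^{s-e-1}\oplus L$ by Proposition~\ref{prop:key} and $s=e+1$. This is precisely where the hypothesis $h^0(X,E)>e+1$ is used, and it is not a consequence of the simple vanishing you invoke.

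Two further points. First, your Serre-duality reformulation of the curve bound is correct but you do not prove it; the paper's Lemma~\ref{lem:clifford} is a direct induction on $e$: a general section vanishing at a general point gives $0\to M\to E_C\to E'\to 0$ with $M$ a line bundle of positive degree and $E'$ ample of rank $e-1$, and one applies Riemann--Roch and the rank-$1$ Clifford theorem to $M$ and induction to $E'$. This avoids entirely the multiplication-map analysis you anticipate, and the same induction controls the equality case (Proposition~\ref{prop:hyperelliptic}). Second, for the ``Moreover'' in (i) you cannot recycle the equality argument of Theorem~\ref{thm:main}, since here $\deg(E)$ need not equal $h^0(X,E)-e$ and the equality $h^0(X,E)=h^0(C,E_C)$ is not available; the paper instead invokes Mumford's theorem \cite[Theorem~2]{mum}, using only that $X$ is normal and $C\cong\p^1$, to deduce $\deg(L)=h^0(X,L)-n$.
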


\begin{remark}\label{rem:n=1}
For $n=1$, both Theorem \ref{thm:main} and Corollary \ref{cor:main}
hold under weaker hypotheses (see Lemma \ref{lem:clifford} and
Proposition \ref{prop:hyperelliptic}).
\end{remark}

As an application of Theorem \ref{thm:main}, we extend Del
Pezzo-Bertini's theorem to non-degenerate subvarieties
$X\subset\g(k,N)$ of Grassmanians of $k$-planes in $\p^N$. In this
context, \emph{non-degenerate} means that $X$ is not contained in
any $\g(k,N-1)$:

\begin{corollary}\label{cor:grass}
Let $X\subset\g(k,N)$ be a non-degenerate embedding. If
$X\subset\g(k,N)$ is not contained in any Schubert variety $\Omega
(k-1,N)$ of $k$-planes containing a linear subspace
$\p^{k-1}\subset\p^N$ then
\[
\deg(X)\geq N-k,
\]
where $\deg (X)$ is the degree of $X\subset\p^M$ given by the
Pl\"ucker embedding of $\g(k,N)$. Moreover, if equality holds then
$X\subset\p^M$ is a variety of minimal degree in its linear span.
\end{corollary}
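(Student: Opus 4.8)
The plan is to apply Theorem~\ref{thm:main} to the restriction to $X$ of the dual of the tautological subbundle on $\g(k,N)$. Let $\mathcal{S}\subset\an_{\g(k,N)}^{N+1}$ be the rank-$(k+1)$ tautological subbundle, with tautological sequence $0\to\mathcal{S}\to\an_{\g(k,N)}^{N+1}\to\q\to 0$, so that $\det(\q)=\det(\mathcal{S}^{\vee})=\an_{\g(k,N)}(1)$ is the Pl\"ucker polarization. Put $E:=\mathcal{S}^{\vee}|_X$ and $L:=\det(E)=\an_{\g(k,N)}(1)|_X$. Dualizing the tautological sequence exhibits $E$ as a quotient of $\an_X^{N+1}$, so $E$ is globally generated, of rank $e=k+1\ge 2$ (the case $k=0$ is excluded, since then $\Omega(-1,N)=\g(0,N)=\p^N$ makes the hypothesis vacuous). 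Since the Pl\"ucker embedding of $\g(k,N)$ restricts to a closed embedding of $X$, $L$ is very ample and $\deg(E)=\deg(L)=L^{n}=\deg(X)>0$.

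Next I would translate the two geometric hypotheses into those of Theorem~\ref{thm:main}. First, $\mathcal{S}^{\vee}$ is globally generated with $h^{0}(\g(k,N),\mathcal{S}^{\vee})=N+1$ (Bott), and the restriction map $H^{0}(\g(k,N),\mathcal{S}^{\vee})\to H^{0}(X,E)$ is injective: a section is a linear form $\phi$ on $k^{N+1}$ whose value at $[\Lambda]$ is $\phi|_{\Lambda}$, and if this vanishes on all of $X$ then every member $\Lambda$ of $X$ lies in the hyperplane $\{\phi=0\}$, i.e. $X\subset\g(k,N-1)$, against non-degeneracy; hence $h^{0}(X,E)\ge N+1$. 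Second, if $E\cong\an_X^{e-1}\oplus L=\an_X^{k}\oplus L$, then dualizing and composing the inclusion of the trivial summand with the tautological inclusion $\mathcal{S}|_X\subset\an_X^{N+1}$ gives a subbundle inclusion $\an_X^{k}\hookrightarrow\an_X^{N+1}$, which (as $X$ is connected and proper) is a constant linear map with $k$-dimensional image $U$ satisfying $U\subset\Lambda$ for every $[\Lambda]\in X$; thus $X\subset\Omega(k-1,N)$, against the hypothesis. So Theorem~\ref{thm:main} applies and yields
\[
\deg(X)=\deg(E)\ge h^{0}(X,E)-e\ge (N+1)-(k+1)=N-k.
\]

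Assume now that equality holds. Then $h^{0}(X,E)=N+1$ and $\deg(E)=h^{0}(X,E)-e$, so Theorem~\ref{thm:main} gives $\deg(L)=h^{0}(X,L)-n$; by the classical statement recalled in Section~\ref{intro}, $\varphi_{|L|}$ is birational onto a variety of minimal degree in $\p^{h^{0}(X,L)-1}$, and, since the Pl\"ucker system is a base-point-free subsystem of $|L|$ that already embeds $X$, $\varphi_{|L|}$ is in fact an embedding. Because $H^{0}(\g(k,N),\mathcal{S}^{\vee})\xrightarrow{\ \sim\ }H^{0}(X,E)$ in the equality case, the Pl\"ucker system on $X$ equals the image of $\bigwedge^{k+1}H^{0}(X,E)\to H^{0}(X,L)$, so to conclude that $X\subset\langle X\rangle$ is a variety of minimal degree it suffices to show this map is surjective (equivalently $\langle X\rangle=\p^{h^{0}(X,L)-1}$, whence $\deg(X)=N-k=h^{0}(X,L)-n=\dim\langle X\rangle+1-n$). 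For this I would induct on $n$ via a general $Y\in|L|$, checking that $Y\subset\g(k,N)$ again satisfies the hypotheses with equality and that $h^{0}(X,E)=h^{0}(Y,E|_Y)=N+1$ and $h^{0}(X,L)=h^{0}(Y,L|_Y)+1$; the base case is the curve section $C\cong\p^{1}$ (rationality and $C\cong\p^1$ being part of the equality case, cf. Corollary~\ref{cor:main}(i)), where $E_C\cong\bigoplus_{i=1}^{k+1}\an_{\p^{1}}(b_i)$ with $b_i\ge 0$ and $\sum_i b_i=N-k$, and the image of $\bigwedge^{k+1}H^{0}(C,E_C)\to H^{0}(C,\det E_C)$ contains every product $t_1\cdots t_{k+1}$ with $t_i\in H^{0}(\an_{\p^1}(b_i))$; since multiplication of global sections of line bundles on $\p^{1}$ is surjective, that image is all of $H^{0}(\an_{\p^1}(N-k))$, so the Pl\"ucker system is complete on $C$, and the induction propagates this to $X$.

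The bound itself is routine once $E$ is chosen correctly. The two delicate points are the dictionary of the second paragraph — matching the cohomology computations on the Grassmannian with non-degeneracy, and matching ``$X$ not in a Schubert variety $\Omega(k-1,N)$'' with ``$E\not\cong\an_X^{e-1}\oplus L$'' — and, the main obstacle, the equality case, where one must verify both that a general $Y\in|L|$ inherits all the hypotheses with the same invariants and that the cohomological bookkeeping ($h^{0}(X,E)=h^{0}(Y,E|_Y)$, $h^{0}(X,L)=h^{0}(Y,L|_Y)+1$, resting on vanishings such as $H^{0}(X,E\otimes L^{-1})=0$) goes through, so that the clean computation on the rational curve $C$ can be lifted back to $X$.
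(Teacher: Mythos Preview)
Your proposal is correct and follows the same approach as the paper: apply Theorem~\ref{thm:main} to the universal bundle $E=\mathcal{S}^\vee|_X$ (which is the paper's $Q_X$), after translating non-degeneracy into $h^0(X,E)\geq N+1$ and ``not in $\Omega(k-1,N)$'' into $E\not\cong\an_X^{e-1}\oplus L$ (the paper packages this dictionary as Remark~\ref{rem:omega}). For the equality case the paper simply quotes the span computation $\langle X\rangle=\p^{N-k+n-1}$ already carried out inside the proof of Theorem~\ref{thm:main}, so your proposed induction reproves that work; one small refinement---to make the lifting step close cleanly you should take $Y$ in the Pl\"ucker subsystem $|W|\subset|L|$ rather than merely general in $|L|$, so that the section $s_Y$ defining $Y$ already lies in $W=\operatorname{Im}\bigl(\bigwedge^{e}H^0(X,E)\to H^0(X,L)\bigr)$.
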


Furthermore, we can also rephrase Corollary \ref{cor:main} for
subvarieties of Grassmannians. Let $X\subset\g(k,N)$, and let
$\an_X(1)$ denote the line bundle on $X$ giving the Pl\"ucker
embedding $X\subset\p^M$. For general Cartier divisors
$H_1,\dots,H_{n-1}\in \an_X(1)$, let $C:=\cap_1^{n-1}H_i$ denote
the curve section (consider $C=X$ for $n=1$) and let $p_a(C)$
denote its arithmetic genus:

\begin{corollary} \label{cor:N+1}
Let $X\subset\g(k,N)$ be a non-degenerate embedding. Assume $N-k\geq
2$ for $n\geq 2$. Let $s\leq k-1$ be a non-negative integer. If
$X\subset\g(k,N)$ is not contained in any Schubert variety
$\Omega(s,N)$ of $k$-planes containing a linear subspace
$\p^s\subset\p^N$, then:
\begin{enumerate}
\item[(i)] if $\deg(X)\leq N-s$ then $C\cong\p^1$. Moreover, if $X$
is normal then $X\subset\p^M$ is (a linear projection of) a variety
of minimal degree in its linear span;
\item[(ii)] if $\deg(X)=N+1-s$ then $p_a(C)\leq1$.
\end{enumerate}
\end{corollary}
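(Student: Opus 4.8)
The plan is to run the argument behind Corollary \ref{cor:grass} but with a bundle that also records the parameter $s$, and to feed the outcome into Corollary \ref{cor:main} rather than into Theorem \ref{thm:main}. Realize $\g(k,N)$ as the Grassmannian of $(k+1)$-dimensional subspaces of an $(N+1)$-dimensional vector space $V$, with tautological sequence $0\to\mathcal S\to V\otimes\an_{\g}\to\mathcal Q\to 0$ (so $\rank\mathcal S=k+1$ and $\an_{\g}(1)=\det\mathcal S^*$), and set $E:=\mathcal S^*|_X$. We may assume $k\geq 1$, the statement being empty for $k=0$. Then $E$ is globally generated of rank $e=k+1$, $L:=\det E=\an_X(1)$, and $\deg E=\deg X>0$ since $\an_X(1)$ is very ample. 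I would then establish a dictionary: (a) as $X$ is non-degenerate, $\sum_{x\in X}\mathcal S_x=V$, so the restriction $V^*=H^0(V^*\otimes\an_X)\to H^0(X,E)$ induced by the surjection $V^*\otimes\an_X\to E$ is injective, whence $h^0(X,E)\geq N+1$; (b) $H^0(X,E^*)=H^0(X,\mathcal S|_X)$ is identified, inside $H^0(X,V\otimes\an_X)=V$, with $\bigcap_{x\in X}\mathcal S_x$, so a $\p^s$ lies in every $k$-plane parametrized by $X$ iff $s+1\leq\dim\bigcap_x\mathcal S_x$, i.e. $X\not\subseteq\Omega(s,N)$ for any $\p^s$ iff $h^0(X,E^*)\leq s$; (c) $E\cong\an_X^{e-1}\oplus L$ iff $X\subseteq\Omega(k-1,N)$ for some $\p^{k-1}$: if $X\subseteq\Omega(k-1,N)$ then on $\Omega(k-1,N)\cong\p^{N-k}$ one has $\mathcal S\cong\an^{k}\oplus\an_{\p^{N-k}}(-1)$ (the relevant $\operatorname{Ext}^1$ vanishes), and restricting to $X$, using that $\Omega(k-1,N)$ is linearly Pl\"ucker-embedded so $\an_{\p^{N-k}}(1)|_X=\an_X(1)$, gives $\mathcal S|_X\cong\an_X^{k}\oplus L^{-1}$; conversely a trivial rank-$k$ subbundle of $\mathcal S|_X\subseteq V\otimes\an_X$ is constant, hence $W\otimes\an_X$ with $\p(W)=\p^{k-1}$ contained in every $k$-plane of $X$. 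Since $s\leq k-1$, the hypothesis $X\not\subseteq\Omega(s,N)$ forces $X\not\subseteq\Omega(k-1,N)$ and so $E\not\cong\an_X^{e-1}\oplus L$; and for $n\geq 2$ the hypothesis $N-k\geq 2$ gives $h^0(X,E)\geq N+1>k+2=e+1$, so Corollary \ref{cor:main} applies (for $n=1$ that restriction on $h^0$ is not imposed).

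For part (i): if $\deg X\leq N-s$, then by (a) and (b) $\deg E=\deg X\leq N-s<(N+1)-s\leq h^0(X,E)-h^0(X,E^*)$, so Corollary \ref{cor:main}(i) gives $C\cong\p^1$; if in addition $X$ is normal it gives $\deg L=h^0(X,L)-n$ with $X$ rational, and since $L=\an_X(1)$ is very ample $\varphi_{|L|}$ embeds $X$ as a variety of minimal degree in $\p^{h^0(X,L)-1}$, of which the Pl\"ucker image $X\subset\p^M$ — given by a subsystem of $|L|$ — is a linear projection.

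For part (ii): if $\deg X=N+1-s$ then $\deg E=N+1-s\leq h^0(X,E)-h^0(X,E^*)$ by (a) and (b). If the inequality is strict we are in the setting of part (i), so $C\cong\p^1$ and $p_a(C)=0$. If $\deg E=h^0(X,E)-h^0(X,E^*)$, then Corollary \ref{cor:main}(ii) gives either $p_a(C)\leq 1$ or $C$ hyperelliptic with $E_C=\mathcal S^*|_C\cong\an_C^{r}\oplus g^1_2\oplus\dots\oplus g^1_2$ (with $e-r\geq 1$ copies, since $\det E_C=\an_C(1)$ has positive degree). I would exclude the latter: dualizing gives $\mathcal S|_C\cong\an_C^{r}\oplus(g^1_2)^{-1}\oplus\dots\oplus(g^1_2)^{-1}$, and in the subbundle inclusion $\mathcal S|_C\hookrightarrow V\otimes\an_C$ the $i$-th copy of $(g^1_2)^{-1}$ is an element of $\operatorname{Hom}((g^1_2)^{-1},V\otimes\an_C)=V\otimes H^0(g^1_2)$, say $u_i\otimes\sigma_1+u_i'\otimes\sigma_2$ for a basis $\sigma_1,\sigma_2$ of $H^0(g^1_2)$ and $u_i,u_i'\in V$; its fibre over $x$ is the line spanned by $\sigma_1(x)u_i+\sigma_2(x)u_i'$, which depends only on $[\sigma_1(x):\sigma_2(x)]$. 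Hence $\mathcal S_x$ depends only on the image of $x$ under the hyperelliptic double cover $C\to\p^1$ defined by $|g^1_2|$, i.e. the morphism $C\to\g(k,N)$ factors through $C\to\p^1$; this contradicts the injectivity of $C\hookrightarrow X\hookrightarrow\g(k,N)$. Therefore $p_a(C)\leq 1$.

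The only genuine obstacle is this last step: turning the exceptional hyperelliptic alternative of Corollary \ref{cor:main}(ii) into a contradiction with the embeddability of $X$, the key being that $g^1_2$ carries only a $2$-dimensional space of sections, which forces the tautological subbundle along the curve section to be pulled back from $\p^1$. Everything else is the bookkeeping in (a)–(c) that translates global sections of $\mathcal S^*|_X$ and $\mathcal S|_X$ into incidence properties of the family of $k$-planes parametrized by $X$.
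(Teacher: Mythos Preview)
Your proof is correct and follows essentially the same approach as the paper. The paper works with the universal quotient bundle $Q_X$ (which coincides with your $\mathcal S^*|_X$), invokes Remark~\ref{rem:omega} for your dictionary (a)--(c), applies Corollary~\ref{cor:main}, and rules out the hyperelliptic alternative by observing that the Pl\"ucker image of $C$ would be a double cover of a rational normal curve of degree $k+1-r$; your argument that $\mathcal S|_C$ is pulled back from $\p^1$ via the hyperelliptic map is a slightly more hands-on version of the same observation.
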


It is proved in \cite{ion} that non-degenerate manifolds
$X\subset\p^N$ of degree $\deg(X)\leq N$ are simply connected.
This topological result follows as a corollary of a complete
classification of such manifolds. According to Corollary
\ref{cor:N+1}, the Pl\"ucker embedding of a non-degenerate normal
subvariety $X\subset\g(k,N)$ of degree $\deg(X)\leq N$ not
contained in any Schubert variety $\Omega(0,N)$ is either a linear
space, or a quadric hypersurface, or a rational scroll, or (a cone
over) the Veronese surface. Hence, we also deduce \emph{a
posteriori} a similar result for subvarieties of Grassmannians:

\begin{corollary}\label{cor:simply}
Assume $N-k\geq 2$ for $n\geq 2$. Then, a non-degenerate normal
subvariety $X\subset\g(k,N)$ of degree $\deg(X)\leq N$ not
contained in any Schubert variety $\Omega(0,N)$ is simply
connected.
\end{corollary}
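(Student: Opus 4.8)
The plan is to reduce everything to the classification of varieties of minimal degree --- already recorded in the paragraph preceding the statement --- and then to verify that each such variety is simply connected.

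When $n=1$ a normal curve is smooth, and Corollary~\ref{cor:N+1}(i) applied with $s=0$ gives $X=C\cong\p^1$, which is simply connected; so from now on I assume $n\geq2$ and $N-k\geq2$. Corollary~\ref{cor:N+1}(i) with $s=0$ then applies, since $\deg(X)\leq N$ and $X$ is not contained in any $\Omega(0,N)$: the Pl\"ucker image $X\subset\p^M$ is a linear projection of a variety of minimal degree lying in its linear span. Since $X$ is normal and every variety of minimal degree is projectively normal, this exhibits a finite birational morphism from a normal variety onto $X$, which must be an isomorphism; so by Del Pezzo--Bertini, $X$ itself is a linear space $\p^n$, an irreducible quadric hypersurface, a rational normal scroll, or (a cone over) the Veronese surface $v_2(\p^2)\subset\p^5$. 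In each case $X$ is, after discarding a linear vertex if present, either $\p^n$, a smooth quadric of positive dimension, a $\p^{d-1}$-bundle over $\p^1$, or $\p^2$; all of these are simply connected (the quadric being a rational homogeneous space, the bundle a fibration with simply connected fibre and base), so it remains only to handle the cone.

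The cone step I would do one vertex point at a time. Writing $C_p(Y)\subset\p^{r+1}$ for the cone with a point vertex $p$ over a connected $Y\subset\p^r$, projection away from $p$ exhibits $C_p(Y)\setminus\{p\}$ as a Zariski-locally trivial affine-line bundle over $Y$, so $\pi_1\bigl(C_p(Y)\setminus\{p\}\bigr)\cong\pi_1(Y)$; since $\{p\}$ has real codimension $\geq2$ in the irreducible variety $C_p(Y)$, the inclusion of the complement induces a surjection on fundamental groups, and therefore $\pi_1\bigl(C_p(Y)\bigr)$ is a quotient of $\pi_1(Y)$. Applying this successively to the points of a basis of the vertex linear space reduces the claim to the smooth base $Y$, which is simply connected by the previous paragraph, and the corollary follows. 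The one point that needs genuine care is the passage from ``linear projection of a variety of minimal degree'' to ``$X$ is itself such a variety'': this rests on the projection onto $X$ being birational --- part of the content of Corollary~\ref{cor:N+1} --- together with projective normality of varieties of minimal degree. Everything else is classical.
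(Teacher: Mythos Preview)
Your argument is correct and follows exactly the line the paper sketches in the paragraph immediately preceding the corollary: apply Corollary~\ref{cor:N+1}(i) with $s=0$ to identify the Pl\"ucker image of $X$ with (a projection of) a variety of minimal degree, and then read off simple connectedness from the Del~Pezzo--Bertini list. The paper gives no further proof, leaving implicit both the step that the projection is an isomorphism when $X$ is normal and the verification that each variety on the list---including the singular cones---is simply connected; you have filled in precisely these details, so there is nothing to correct. (One cosmetic remark: in your isomorphism step only normality of the target $X$ is used, so invoking projective normality of varieties of minimal degree is harmless but unnecessary.)
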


The bound $\deg(X)\leq N$ is optimal in both \cite{ion} and
Corollary \ref{cor:simply}. Indeed, for any $r\geq 1$ there exist
$r$-dimensional elliptic scrolls in $\p^{2r}$ of degree $2r+1$.
Considering the corresponding elliptic curve in $\g(r-1,2r)$ of
degree $2r+1$, we obtain that also our bound is sharp. On the other
hand, if we consider a cone in $\p^{2r+1}$ of vertex a point over an
$r$-dimensional elliptic scroll in $\p^{2r}$ of degree $2r+1$, the
corresponding elliptic curve of degree $2r+1$ in $\g(r,2r+1)$ shows
that the assumption \emph{$X$ not contained in any Schubert variety
$\Omega(0,N)$} cannot be dropped.

\smallskip

The paper is organized as follows. Section \ref{section:proof} is
devoted to the proof of Theorem \ref{thm:main} and Corollary
\ref{cor:main}. To this purpose, the crucial point here is to
prove that $h^0(X,E)\leq h^0(C,E_C)$ whenever $E$ is not
isomorphic to $\an_X^{e-1}\oplus L$ and $\deg(E)>0$ (see Corollary
\ref{cor:curve}). Then we obtain a degree bound for globally
generated vector bundles on curves (see Lemma \ref{lem:clifford}
and Proposition \ref{prop:hyperelliptic}). Using the
correspondence between globally generated vector bundles and maps
to Grassmannians, in Section \ref{section:grass} we translate
Theorem \ref{thm:main} and Corollary \ref{cor:main} into Corollary
\ref{cor:grass} and Corollary \ref{cor:N+1}, respectively. Some
examples of subvarieties on the boundary are also included.

\section{Proof of Theorem \ref{thm:main}}\label{section:proof}
Let $E$ be a globally generated vector bundle of rank $e\geq 2$ on
$X$ and let $L:=\det(E)$.

\begin{proposition}\label{prop:key}
If $h^0(X,E\otimes L^{-1})\neq 0$ then $E\cong\an_X^{e-1}\oplus L$.
\end{proposition}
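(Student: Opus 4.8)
The plan is to analyze a nonzero section $s \in H^0(X, E \otimes L^{-1})$, which we view as a bundle map $s \colon \an_X \to E \otimes L^{-1}$, or dually as a morphism $\phi \colon E^* \to L^{-1}$, i.e.\ an element of $H^0(X, E^* \otimes L^{-1})^{\vee}$... more precisely, tensoring $s$ by $L$ gives a map $\sigma \colon L \to E$. The key observation is that since $L = \det E = \wedge^e E$, a section of $E \otimes L^{-1} = E \otimes (\wedge^e E)^{-1}$ is, by the standard isomorphism $\wedge^{e-1} E^* \cong E \otimes (\wedge^e E)^{-1}$ (valid for a rank-$e$ bundle), the same as a section of $\wedge^{e-1} E^*$. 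So first I would rewrite $H^0(X, E \otimes L^{-1}) \cong H^0(X, \wedge^{e-1} E^*)$ and fix a nonzero $\omega \in H^0(X, \wedge^{e-1} E^*)$.

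Next I would exploit global generation of $E$. Pick a general point $x \in X$; the fibre $E_x$ is an $e$-dimensional vector space, and $\omega_x \in \wedge^{e-1} E_x^*$ is a decomposable covector (everything in top-minus-one exterior power of a vector space is decomposable), so its kernel, i.e.\ the annihilator $\{v \in E_x : \iota_v \omega_x = 0\}$, is a line $\ell_x \subset E_x$ — provided $\omega_x \neq 0$. I would first argue that $\omega_x \neq 0$ for all $x$: if $\omega$ vanished on a divisor, one could factor $\omega$ through a proper subsheaf, but more directly, since $E$ is globally generated the evaluation $H^0(X,E) \otimes \an_X \to E$ is surjective, and pairing with $\omega$ one gets that the locus where $\omega = 0$ must be empty or all of $X$ by an argument on $\wedge^{e-1}$ of the trivial bundle; the hypothesis $\deg(E) > 0$ (hence $E \not\cong \an_X^e$, and $L$ nontrivial) should rule out $\omega \equiv 0$ being forced, while a genuine zero locus contradicts that $\wedge^{e-1} E^*$ restricted there would need sections vanishing. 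The cleaner route: consider instead the dual picture — $\omega$ gives a surjection $E \twoheadrightarrow \q$ onto a line bundle $\q$ away from the zeros of $\omega$ (the quotient by $\ker$), with $\wedge^{e-1}$ of the kernel twisted appropriately equal to $\q$. Then $\det(\ker) \otimes \q = L$, so $\det(\ker) = L \otimes \q^{-1}$.

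Then I would show $\q \cong L$, which forces the kernel $F := \ker$ to have $\det F = \an_X$, and since $F$ is a subbundle of a globally generated bundle with trivial determinant and (one checks) globally generated, $F \cong \an_X^{e-1}$; combined with the splitting of $0 \to F \to E \to L \to 0$ (which splits because $\operatorname{Ext}^1(L, \an_X^{e-1}) = H^1(X, L^{-1})^{\oplus(e-1)}$ — hmm, this need not vanish, so instead I'd argue the section $\omega$ together with global generation directly produces $e-1$ independent sections of $E$ spanning a trivial subbundle, whose quotient is then a globally generated line bundle of degree $\deg(E)$, i.e.\ $L$). To get $\q \cong L$: the quotient $\q$ is globally generated (image of $E$), and from $\det F \otimes \q = L$ with $F \subseteq E$ globally generated we get $\deg \q \le \deg L$ and $\deg(\det F) \ge 0$; but also $\omega \in H^0(\wedge^{e-1} E^*)$ being nowhere zero means the sub-line-bundle $\ker \omega^\vee \subset E$... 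I would chase inequalities on degrees using that $\deg(\det F) + \deg(\q) = \deg(L) = \deg(E)$ and global generation forces $\deg(\det F) \geq 0$, $\deg(\q) \geq 0$, pinning down $\deg(\det F) = 0$ hence $F$ trivial.

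The main obstacle I anticipate is the very first step — proving $\omega$ is nowhere vanishing (equivalently, that the induced map $E \to \q$ has no degeneracy locus). This is where $\deg(E) > 0$ must enter, and it is the heart of the matter: if $\omega$ vanished along a subscheme $Z$, the kernel sheaf would fail to be a subbundle there and the degree bookkeeping above breaks; one must rule this out, perhaps by observing that the zero locus of a section of $\wedge^{e-1} E^*$ pulled back from the Grassmannian map $X \to \g(e-1, H^0(X,E))$ corresponds to an intersection with a special Schubert cycle, and global generation plus positivity of $\det$ prevents $X$ from meeting it. Once nowhere-vanishing is established, the rest is the determinant/degree bookkeeping sketched above, which should go through routinely.
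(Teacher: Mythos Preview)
Your proposal correctly identifies the skeleton of the argument --- a nonzero section of $E \otimes L^{-1}$ gives an injection $\sigma\colon L \to E$, and once one knows this section is nowhere vanishing the cokernel is a globally generated bundle of rank $e-1$ with trivial determinant, hence trivial, and the splitting follows from global generation of $E$. However, you explicitly flag the nowhere-vanishing step as the main obstacle and do not actually prove it: the heuristics you offer (Schubert-cycle intersections, ``locus must be empty or all of $X$'') are not arguments, and your diagnosis that ``this is where $\deg(E) > 0$ must enter'' is off --- the proposition does not assume $\deg(E) > 0$, and the proof does not use it here. The paper settles this step by restricting to a curve: if $s$ vanished at some $x$, choose a curve $C \subset X$ through $x$ not contained in the zero locus; on its normalization $\widetilde{C}$ the cokernel of $L_{\widetilde{C}} \to E_{\widetilde{C}}$ is $Q \oplus t$ with $Q$ a globally generated (hence nonnegative-degree) rank-$(e-1)$ bundle and $t$ a torsion sheaf with $x$ in its support (hence $\length(t) \geq 1$); but $\deg Q + \length(t) = \deg(E_{\widetilde{C}}) - \deg(L_{\widetilde{C}}) = 0$, a contradiction. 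This is the missing idea.

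There is also a persistent confusion in your sketch between kernel and quotient. The section exhibits $L$ as a \emph{sub}bundle of $E$, and it is the \emph{quotient} $F := E/L$ that has rank $e-1$; your line-bundle quotient ``$\q$'' and rank-$(e-1)$ kernel have the ranks reversed. This matters: $F$ is globally generated because it is a quotient of $E$, whereas a subbundle of a globally generated bundle need not be, so your ``one checks globally generated'' for the kernel is unjustified. In the correct orientation the endgame is clean: $\det F \cong \det E \otimes L^{-1} \cong \an_X$, and any map $u\colon\an_X^{e-1} \to F$ of full rank at one point (which exists by global generation) has $\det(u)$ a nonzero section of $\an_X$, hence is an isomorphism everywhere. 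The splitting $E \cong \an_X^{e-1} \oplus L$ then comes from global generation of $E$ via Remark~\ref{rem:^*} (the surjection $E \to \an_X^{e-1}$ furnishes $e-1$ independent sections of $E^*$), not from vanishing of $\operatorname{Ext}^1(L, \an_X^{e-1})$, which as you correctly note need not hold.
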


\begin{proof}
Let $s\in H^0(X,E\otimes L^{-1})$ be a non-zero section. Consider
the corresponding exact sequence of sheaves on $X$
\[
\xymatrix {0\ar[r] & L \ar[r] & E \ar[r] & F \ar[r]&0.}
\]
Let $Z=\{x\in X\mid s(x)=0\}$ denote the zero-locus of $s$. We
claim that $Z=\emptyset$. To get a contradiction, let $x\in Z$ and
let $C\subset X$ be a curve not contained in $Z$ such that
$x\in C$. Restricting the above exact sequence to the normalization $\widetilde C$ of $C$, we obtain
an exact sequence of sheaves
\[
\xymatrix {0\ar[r] & L_{\widetilde C} \ar[r] & E_{\widetilde C} \ar[r] & Q\oplus t
\ar[r]&0},
\]
where $Q$ is a vector bundle of rank $e-1$ on $\widetilde C$ and $t$ is a
torsion sheaf on $\tilde C$. Then
\[
\deg(Q)+\length(t)=\deg(E_{\widetilde C})-\deg(L_{\widetilde C})=0.
\]
This yields a contradiction since $Q$ is globally generated, so
$\deg(Q)\geq 0$, and  $x\in\Supp(t)$, so $\length(t)\geq 1$.

We deduce from $Z=\emptyset$ that $F$ is a globally generated vector
bundle of rank $e-1$ over $X$. Thus there exists a morphism of
vector bundles $u:\an_X^{e-1}\to F$
of rank $e-1$ at some point $x\in X$, and taking determinants we get
a morphism $\det(u):\an_X\to\det(F)$.
Then $u$ has rank $e-1$ at every $x\in X$, since $\det(F)\cong c_1(F)\cong c_1(E)-c_1(L)\cong \an_X$.
Therefore $F\cong\an_X^{e-1}$, whence $E\cong\an_X^{e-1}\oplus L$
since $E$ is globally generated (cf. Remark \ref{rem:^*}). 
\end{proof}

\begin{remark}\label{rem:^*}
Let $E^*$ denote the dual bundle of $E$. If $E$ is a globally
generated vector bundle on $X$ we recall that $h^0(X,E^*)=r$ if and
only if $E\cong\an_X^r\oplus F$, where $F$ is a globally generated
vector bundle with $h^0(X,F^*)=0$. Moreover, if $X$ is a curve, then
a globally generated vector bundle $E$ on $X$ is ample if and only
if $h^0(X,E^*)=0$.
\end{remark}

For any Cartier divisor $Y\in|L|$, let $E_Y$ denote the restriction
of $E$ to $Y$. It follows from Remark \ref{rem:^*} that
$h^0(Y,E^*_Y)\geq h^0(X,E^*)$. Moreover:

\begin{lemma}\label{lem:e+1}
If $h^0(Y,E^*_Y)>h^0(X,E^*)$ for some $Y\in|L|$, then
$h^0(X,E)=e+1$.
\end{lemma}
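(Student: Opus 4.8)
The plan is to exploit the evaluation sequence of $E$ and its dual, so that the statement reduces to an application of Proposition \ref{prop:key} to the first syzygy bundle of $E$. Write the evaluation sequence
$0 \to M \to H^0(X,E)\otimes\an_X \to E \to 0$; here $M$ is locally free of rank $\rho:=h^0(X,E)-e$, and comparing determinants gives $\det M\cong L^{-1}$. One has $\rho\geq 1$, since $\rho=0$ would force $E\cong\an_X^e$ and then both sides of the hypothesis would equal $e$; the assertion of the lemma is precisely that $\rho=1$. Dualizing — legitimate because $E$ is locally free, so $\mathcal{E}xt^1(E,\an_X)=0$ — yields $0 \to E^* \to H^0(X,E)^*\otimes\an_X \to M^* \to 0$, exhibiting $M^*$ as a globally generated vector bundle of rank $\rho$ with $\det M^*\cong L$. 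I will argue by contradiction, assuming $\rho\geq 2$.

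The heart of the argument is to extract from the hypothesis that $H^0(X,M^*\otimes L^{-1})\neq 0$. Take $Y\in|L|$ general; since $n\geq 2$, Bertini's theorem makes $Y$ reduced and connected, so $H^0(Y,\an_Y)=k$ (for $n=1$ a general member of $|L|$ is a disconnected finite set and the argument, correctly, breaks down — cf. Remark \ref{rem:n=1}). Restricting the dual evaluation sequence to $Y$ stays exact, as $M^*$ is locally free; passing to global sections and using $H^0(Y,H^0(X,E)^*\otimes\an_Y)=H^0(X,E)^*$ — where connectedness of $Y$ is essential — one identifies $H^0(X,E^*)$ with the kernel of $H^0(X,E)^*\to H^0(X,M^*)$ and $H^0(Y,E^*_Y)$ with the kernel of $H^0(X,E)^*\to H^0(Y,M^*_Y)$. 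Since the second map factors as the first followed by the restriction $H^0(X,M^*)\to H^0(Y,M^*_Y)$, one always has $H^0(X,E^*)\subseteq H^0(Y,E^*_Y)$, and this inclusion is strict — which is the hypothesis — exactly when that restriction has nonzero kernel; but $\ker\big(H^0(X,M^*)\to H^0(Y,M^*_Y)\big)=H^0\big(X,M^*\otimes\an_X(-Y)\big)=H^0(X,M^*\otimes L^{-1})$.

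Now Proposition \ref{prop:key}, applied to $M^*$ (rank $\rho\geq 2$, determinant $L$), gives $M^*\cong\an_X^{\rho-1}\oplus L$, hence $M\cong\an_X^{\rho-1}\oplus L^{-1}$. Splitting off from $H^0(X,E)\otimes\an_X$ the trivial subbundle $\an_X^{\rho-1}\subseteq M$, the quotient $\big(H^0(X,E)\otimes\an_X\big)/\an_X^{\rho-1}$ is globally generated with trivial determinant, hence trivial — a globally generated bundle with trivial determinant has constant associated morphism to a Grassmannian, the Pl\"ucker line bundle being very ample and $H^0(X,\an_X)=k$. This produces an exact sequence $0 \to L^{-1} \to \an_X^{h^0(X,E)-\rho+1} \to E \to 0$. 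On global sections, the map $H^0\big(\an_X^{h^0(X,E)-\rho+1}\big)\to H^0(X,E)$ is surjective, because the composite $H^0(X,E)\otimes\an_X\to\an_X^{h^0(X,E)-\rho+1}\to E$ is the evaluation map and so induces the identity on $H^0$; and it is injective since $H^0(X,L^{-1})=0$ ($L$ being globally generated of positive degree, hence nontrivial). Thus it is an isomorphism, forcing $h^0(X,E)-\rho+1=h^0(X,E)$, that is $\rho=1$, contradicting $\rho\geq 2$. Hence $\rho=1$ and $h^0(X,E)=e+1$.

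The step I expect to be the main obstacle is the bookkeeping behind the identity $\ker\big(H^0(X,M^*)\to H^0(Y,M^*_Y)\big)=H^0(X,M^*\otimes L^{-1})$ and, more importantly, the correct reading of $H^0(Y,E^*_Y)$ as a subspace of $H^0(X,E)^*$: both rest on the dual evaluation sequence remaining exact after restriction to $Y$ and on $Y$ being reduced and \emph{connected}, which is exactly where the hypothesis $n\geq 2$ (through the genericity of $Y$) does its work and why the curve case must be handled separately. A secondary but essential point is to confirm that the sequence built from $M\cong\an_X^{\rho-1}\oplus L^{-1}$ genuinely induces the identity on $H^0$, so that the closing rank count is an equality rather than an inequality pointing the unhelpful way.
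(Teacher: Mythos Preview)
Your approach is the paper's: form the syzygy bundle $M$ (the paper's $G$), dualize to a globally generated bundle $M^*$ with $\det M^*\cong L$, deduce $H^0(M^*\otimes L^{-1})\neq 0$ from the hypothesis, apply Proposition~\ref{prop:key}, and read off the rank. The paper finishes in one line: since $M$ is the kernel of the evaluation map, $H^0(M)$ is the kernel of the identity on $H^0(E)$, hence $h^0(M)=0$ and $\rho-1=0$. Your route through the quotient $\an_X^s/\an_X^{\rho-1}$ reaches the same end but is needlessly elaborate.

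There is one genuine slip. The hypothesis hands you a \emph{specific} $Y\in|L|$ with $h^0(E^*_Y)>h^0(E^*)$; you replace it by a \emph{general} member and then invoke the inequality for that general $Y$. But $h^0(E^*_Y)$ is upper-semicontinuous in $Y$, so the inequality may hold only at the special $Y$ and fail generically---you risk discarding the hypothesis altogether. Run the kernel identification with the given $Y$. Your underlying worry, that identifying $H^0(E^*_Y)$ as a subspace of $H^0(X,E)^*$ requires $H^0(\an_Y)=k$, is legitimate (the paper's ``simple diagram chase'' uses it implicitly as well); but the remedy is to carry that as a condition on the given $Y$, not to swap it for a general one.
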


\begin{proof}
Let $s:=h^0(X,E)$. Consider the exact sequence of vector bundles
\[
\xymatrix {0\ar[r] & G \ar[r] & \an_X^s \ar[r] & E \ar[r]&0,}
\]
where $G$ arises as the kernel of the evaluation morphism. Dualizing
the exact sequence above and restricting to $Y$, we obtain the
following commutative diagram:
\[
\xymatrix{
         & 0                        \ar[d] & 0                          \ar[d] & 0                        \ar[d] &  \\
0 \ar[r] & E^*\otimes L^{-1} \ar[r] \ar[d] & \an_X^s\otimes L^{-1}\ar[r]\ar[d] & G^*\otimes L^{-1} \ar[r] \ar[d] & 0\\
0 \ar[r] & E^*               \ar[r] \ar[d] & \an_X^s              \ar[r]\ar[d] & G^*               \ar[r] \ar[d] & 0\\
0 \ar[r] & E^*_Y             \ar[r] \ar[d] & \an_Y^s              \ar[r]\ar[d] & G^*_Y             \ar[r] \ar[d] & 0\\
         & 0                               & 0                                 & 0                               &  \\
}
\]
If $h^0(Y,E^*_Y)>h^0(X,E^*)$, then a simple diagram chase shows that
$h^0(X,G^*\otimes L^{-1})\neq 0$. It follows from Proposition
\ref{prop:key} that $G^*\cong\an_X^{s-e-1}\oplus L$, where $L=\det(E)\cong\det(G^*)$. In particular $G\cong\an_X^{s-e-1}\oplus L^{-1}$,
whence $s=e+1$ since $h^0(X,G)=0$. 
\end{proof}

Assume furthermore $n\geq 2$ and $\deg(E)>0$. For general Cartier divisors
$Y_1,\dots,Y_{n-1}\in |L|$, let $C:=\cap_{i=1}^{n-1} Y_i$ denote
the curve section. Proposition \ref{prop:key} and Lemma
\ref{lem:e+1} yield the following:

\begin{corollary}\label{cor:curve}
Assume $n\geq 2$ and $\deg(E)>0$. If $E$ is not isomorphic to
$\an_X^{e-1}\oplus L$, then:
\begin{enumerate}
\item[(i)] $h^0(X,E)\leq h^0(C,E_C)$;
\item[(ii)] if $h^0(X,E)>e+1$, then $h^0(X,E^*)=h^0(C,E^*_C)$.
\end{enumerate}
\end{corollary}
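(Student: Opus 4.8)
The plan is to prove (i) and (ii) simultaneously by induction on $n\geq 2$, peeling off one dimension at a time via a general divisor $Y=Y_1\in|L|$. Since $\deg(E)=L^n>0$, the morphism $\varphi_{|L|}$ is generically finite onto an $n$-dimensional image, so all the partial intersections $Y_1\cap\dots\cap Y_i$ — and in particular $C$ — are reduced and irreducible by Bertini; I will use this freely.

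First I would record the two one-step estimates. From $0\to E\otimes L^{-1}\to E\to E_Y\to 0$ and Proposition \ref{prop:key}, which applies because $E\not\cong\an_X^{e-1}\oplus L$, we get $h^0(X,E\otimes L^{-1})=0$ and hence $h^0(X,E)\leq h^0(Y,E_Y)$; this is the inequality driving (i). For (ii), assume $h^0(X,E)>e+1$. We already know $h^0(Y,E_Y^*)\geq h^0(X,E^*)$ (Remark \ref{rem:^*}), and since $h^0(X,E)\neq e+1$ the contrapositive of Lemma \ref{lem:e+1} gives the opposite inequality, so $h^0(Y,E_Y^*)=h^0(X,E^*)$. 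As $\deg(E_Y)=L^n>0$, $L_Y=\det(E_Y)$, and $C$ is a curve section of $Y$, the idea is to feed $Y$ back into the corollary. For $n=2$ there is nothing more to do since $C=Y$; for $n\geq 3$ applying the inductive hypothesis to $Y$ needs $E_Y\not\cong\an_Y^{e-1}\oplus L_Y$ (and, for part (ii), $h^0(Y,E_Y)\geq h^0(X,E)>e+1$, which is automatic).

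The crux is therefore the \emph{exceptional case}, and here is the key claim I would isolate: if $E\not\cong\an_X^{e-1}\oplus L$ and $\deg(E)>0$, then $E_Y\cong\an_Y^{e-1}\oplus L_Y$ forces $h^0(X,E)=e+1$. To see it, write $E\cong\an_X^r\oplus F$ with $r=h^0(X,E^*)$ and $F$ globally generated with $h^0(X,F^*)=0$ (Remark \ref{rem:^*}); since $\det(E)=L$ has positive degree and $E\not\cong\an_X^{e-1}\oplus L$, one gets $r\leq e-2$ (indeed $r=e-1$ would force $F\cong L$, and $r=e$ would force $L\cong\an_X$). On the other hand $E_Y\cong\an_Y^{e-1}\oplus L_Y$ together with $\deg(L_Y)>0$ and $Y$ reduced irreducible gives $h^0(Y,E_Y^*)=e-1$ (as $h^0(Y,\an_Y)=1$ and $h^0(Y,L_Y^{-1})=0$). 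Hence $h^0(Y,E_Y^*)=e-1>e-2\geq h^0(X,E^*)$, and Lemma \ref{lem:e+1} yields $h^0(X,E)=e+1$. With this claim the induction closes: for (i), if $E_Y\not\cong\an_Y^{e-1}\oplus L_Y$ combine $h^0(X,E)\leq h^0(Y,E_Y)$ with the inductive hypothesis for $Y$, while if $E_Y\cong\an_Y^{e-1}\oplus L_Y$ then $E_C\cong\an_C^{e-1}\oplus L_C$ and $h^0(C,E_C)\geq(e-1)+h^0(C,L_C)\geq(e-1)+2=e+1=h^0(X,E)$, using that $L_C$ is globally generated of positive degree on the curve $C$; for (ii), the claim excludes the exceptional case, so the inductive hypothesis applies to $Y$ and gives $h^0(Y,E_Y^*)=h^0(C,E_C^*)$, which together with $h^0(X,E^*)=h^0(Y,E_Y^*)$ concludes.

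I expect the main obstacle to be exactly this exceptional case $E_Y\cong\an_Y^{e-1}\oplus L_Y$: it obstructs a naive iteration of Proposition \ref{prop:key}, and it cannot be dispatched by a direct cohomological estimate, because $h^0$ of the line bundle $L_C$ may grow under further restriction. The resolution is that Lemma \ref{lem:e+1} rigidly pins $h^0(X,E)=e+1$ in this situation, after which the crude bound $h^0(C,E_C)\geq e+1$ is enough.
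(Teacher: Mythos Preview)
Your proof is correct and follows essentially the same route as the paper's: Bertini to guarantee the partial intersections are reduced and irreducible, Proposition~\ref{prop:key} for the one-step inequality $h^0(X,E)\le h^0(Y,E_Y)$, and Lemma~\ref{lem:e+1} to control $h^0$ of the dual and to rule out the exceptional form on $Y$ when $h^0(X,E)>e+1$. The only organizational difference is that the paper disposes of the case $h^0(X,E)=e+1$ at the outset (noting directly that $E_C$ is globally generated and non-trivial, hence $h^0(C,E_C)\ge e+1$), whereas you reach it through your ``exceptional case'' claim; the content is the same, and your explicit verification that $h^0(X,E^*)\le e-2$ versus $h^0(Y,E_Y^*)=e-1$ is exactly what underlies the paper's terse ``In particular, $E_{Y_1}$ cannot be isomorphic to $\an_{Y_1}^{e-1}\oplus L_{Y_1}$''.
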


\begin{proof}
We first remark that $\cap_{i=1}^j Y_i$ is reduced and irreducible
for every $j\in\{1,\dots,n-1\}$. This follows from  Bertini's
theorems applied to the morphism
$\varphi_{|L|}:X\to\p^{h^0(X,L)-1}$ (see, for instance,
\cite[Corollary 6.11]{jou}). Here the hypothesis $\charac(k)=0$ is
used. If $h^0(X,E)=e+1$ then $h^0(C,E_C)\geq e+1$, since $E_C$ is
globally generated and non-trivial. Assume now $h^0(X,E)>e+1$.
Consider the exact sequence
\[
\xymatrix {0\ar[r] & E\otimes L^{-1} \ar[r] & E \ar[r] & E_{Y_1}
\ar[r]&0.}
\]
It follows from Proposition \ref{prop:key} that $h^0(X,E\otimes
L^{-1})=0$. Hence $h^0(X,E)\leq h^0(Y_1,E_{Y_1})$. On the other
side, Lemma \ref{lem:e+1} yields $h^0(X,E^*)=h^0(Y_1,E^*_{Y_1})$. In
particular, $E_{Y_1}$ cannot be isomorphic to $\an_{Y_1}^{e-1}\oplus
L_{Y_1}$. Combining Proposition \ref{prop:key} and Lemma
\ref{lem:e+1} we recursively obtain
$h^0(X,E)\leq h^0(Y_1,E_{Y_1})\leq h^0(Y_1\cap Y_2,E_{Y_1\cap
Y_2})\leq\dots\leq h^0(C,E_{C})$, and also
$h^0(X,E^*)=h^0(Y_1,E^*_{Y_1})=h^0(Y_1\cap Y_2,E^*_{Y_1\cap
Y_2})=\dots=h^0(C,E^*_{C})$. 
\end{proof}

We now consider globally generated vector bundles on curves. The
following lemma generalizes \cite[Proposition 2]{ion}:

\begin{lemma}\label{lem:clifford}
Let $C$ be a reduced irreducible curve of arithmetic genus
$p_a(C)\geq 1$. Let $E$ be a globally generated vector bundle of
rank $e\geq 1$ over $C$, and let $E^*$ denote its dual bundle.
Then $$\deg(E)\geq h^0(C,E)-h^0(C,E^*).$$
\end{lemma}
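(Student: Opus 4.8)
The plan is to reduce the statement about vector bundles on curves to the classical Clifford inequality for line bundles, via a filtration argument. First I would pass to the normalization $\pi:\widetilde C\to C$: since $E$ is globally generated, so is $\pi^*E$, and one checks $h^0(\widetilde C,\pi^*E)\geq h^0(C,E)$, $h^0(\widetilde C,(\pi^*E)^*)\leq h^0(C,E^*)$, and $\deg(\pi^*E)=\deg(E)$, while $p_a(\widetilde C)\leq p_a(C)$; the inequality on $\widetilde C$ (if $g(\widetilde C)\geq 1$) therefore implies the one on $C$, and the case $g(\widetilde C)=0$ needs to be handled separately using Remark \ref{rem:^*} (on $\p^1$ every globally generated bundle splits as a sum of $\an(a_i)$ with $a_i\geq 0$, and the inequality becomes an equality). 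So I may assume $C$ smooth of genus $g\geq 1$.

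Now the key step: by Remark \ref{rem:^*} I may write $E\cong\an_C^r\oplus F$ with $r=h^0(C,E^*)$ and $F$ globally generated ample of rank $e-r$; then $\deg(E)=\deg(F)$, $h^0(C,E)=r+h^0(C,F)$, $h^0(C,E^*)=r$ (using $h^0(F^*)=0$), so it suffices to prove $\deg(F)\geq h^0(C,F)$ for $F$ globally generated ample. I would induct on $\rank F$. If $\rank F=1$ then $F=\mathcal L$ is a globally generated line bundle of positive degree, and I want $\deg\mathcal L\geq h^0(\mathcal L)$; this is exactly Clifford's theorem $h^0(\mathcal L)\leq \deg\mathcal L/2+1$ when $\mathcal L$ is special, and Riemann–Roch $h^0(\mathcal L)=\deg\mathcal L+1-g\leq \deg\mathcal L$ when $\mathcal L$ is nonspecial (here $g\geq 1$ is used). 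For the inductive step, since $F$ is globally generated of rank $\geq 2$, a general section vanishes nowhere, giving an exact sequence
\[
0\to \an_C\to F\to F'\to 0
\]
with $F'$ a globally generated bundle of rank $\rank F-1$ and $\deg F'=\deg F$. Taking cohomology, $h^0(C,F)\leq 1+h^0(C,F')$. The difficulty is that $F'$ need not be ample, i.e. $h^0(C,(F')^*)$ may be nonzero; but then I again split off the trivial part, $F'\cong\an_C^{r'}\oplus F''$ with $F''$ ample globally generated of rank $<\rank F$, and $h^0(C,F')=r'+h^0(C,F'')$, $\deg F'=\deg F''$, so $h^0(C,F)\leq 1+r'+h^0(C,F'')$. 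The sticky point is controlling the case $r'\geq 1$: here I would use that the composite $\an_C\to F\to F'$ has image meeting the $\an_C^{r'}$ summand, forcing either a contradiction with ampleness of $F$ (if the section $\an_C\hookrightarrow F$ factored through a trivial sub-bundle, $F$ would have a trivial quotient or the cokernel would fail to be a bundle) or showing $\deg F''=\deg F\geq 1+\deg F'' $ cannot happen trivially — so more care is needed: better to choose the section of $F$ so that the induced map $\an_C\to \an_C^{r'}$ (the composite with projection) is nonzero, which is possible because $F$ is globally generated, and then $F'/\text{image}$ picks up the ampleness. I expect this bookkeeping around the non-ample quotient $F'$ to be the main obstacle; the cleanest route may be to induct directly on $h^0(C,E)-h^0(C,E^*)=h^0(C,F)$ rather than on rank, peeling off one section at a time and invoking the rank-one Clifford bound as the base case.

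In summary: (1) normalize and dispose of genus $0$; (2) split $E=\an_C^r\oplus F$ with $F$ ample globally generated, reducing to $\deg F\geq h^0(F)$; (3) induct, with base case the line-bundle Clifford/Riemann–Roch dichotomy, and inductive step given by a nowhere-vanishing section $0\to\an_C\to F\to F'\to 0$, carefully re-splitting $F'$ and tracking degrees and $h^0$. The hypothesis $p_a(C)\geq 1$ enters precisely at the base case (on a rational curve the bound can only be an equality, which is why the lemma is stated with $\geq 1$), and $\charac k=0$ is not needed here since we are already on a curve and Bertini on $X$ was used earlier.
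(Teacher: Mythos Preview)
Your proposal has two genuine gaps.

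The more serious one is in the inductive step. Your worry that $F'$ might fail to be ample is actually misplaced: since $F'$ is a quotient of the ample bundle $F$, it is automatically ample (and globally generated), so $r'=0$ always. The real problem is an off-by-one error. From $0\to\an_C\to F\to F'\to 0$ you get $\deg F=\deg F'$ and $h^0(F)\leq 1+h^0(F')$; the inductive hypothesis then yields only
\[
\deg F=\deg F'\geq h^0(F')\geq h^0(F)-1,
\]
not $\deg F\geq h^0(F)$. The trouble is that the sub-line-bundle $\an_C$ satisfies $\deg\an_C=0<1=h^0(\an_C)$, so it violates the very inequality you are trying to propagate. The paper fixes this by choosing instead a general section $s\in H^0(C,E(-p))$ for a general smooth point $p$: the resulting sub-line-bundle $M$ then has $\deg M\geq 1$, so the rank-one case gives $\deg M\geq h^0(M)$, the quotient $E'$ is still ample as a quotient of $E$, and
\[
\deg E=\deg M+\deg E'\geq h^0(M)+h^0(E')\geq h^0(E)
\]
closes the induction cleanly.

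The second gap is in the normalization step when $g(\widetilde C)=0$. Your claim that on $\p^1$ ``the inequality becomes an equality'' is false: for $\pi^*E\cong\bigoplus_i\an_{\p^1}(a_i)$ with $a_i\geq 0$ one computes $h^0(\pi^*E)-h^0((\pi^*E)^*)=\deg(\pi^*E)+\#\{i:a_i>0\}$, so $\deg(\pi^*E)\geq h^0(\pi^*E)-h^0((\pi^*E)^*)$ fails unless $\pi^*E$ is trivial. Hence when $C$ is a singular rational curve with $p_a(C)\geq 1$ you cannot deduce the lemma on $C$ from anything on $\widetilde C\cong\p^1$. The paper sidesteps this entirely by working directly on the possibly singular curve, invoking Riemann--Roch for singular curves and the generalized Clifford theorem of Eisenbud--Koh--Stillman in the rank-one base case; no normalization is performed.
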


\begin{proof}
By Remark \ref{rem:^*}, we may assume that $E$ is ample, and, in
that case, it is enough to prove that $\deg(E)\geq h^0(C,E)$. We
proceed by induction, as in \cite{ion}. Assume $e=1$. If
$h^1(C,E)=0$ the result follows from Riemann-Roch's theorem for
singular curves (see, for instance, \cite[Example 18.3.4~(a)]{f}).
If $h^1(C,E)\neq 0$ then $\deg(E)\geq 2(h^0(C,E)-1)$ by a
generalization of Clifford's theorem (see \cite[Theorem A]{eks}),
so $\deg(E)\geq h^0(C,E)$ since $h^0(C,E)\geq 2$ unless
$E\cong\an_C$. Suppose now $e\geq 2$. If $h^0(C,E)\leq e$ then
$E\cong\an_C^e$ since $E$ is globally generated, so we can assume
$h^0(C,E)\geq e+1$. So, if $p\in C$ is a general point, a general
section $s\in H^0(C,E(-p))$ does not vanish at any singular point
of $C$. Thus, it induces an exact sequence
\[
\xymatrix {0\ar[r] & M \ar[r] & E \ar[r] & E' \ar[r]&0,}
\]
where $M$ is a line bundle of positive degree and $E'$ is an ample
vector bundle of rank $e-1$ over $C$. Thus $\deg(E')\geq
h^0(C,E')$ by the induction hypothesis, and using again
Riemann-Roch's theorem as well as \cite[Theorem A]{eks} it follows
that $\deg(M)\geq h^0(C,M)$. These two facts together yield
$$\deg(E)=\deg(E')+\deg(M)\geq h^0(C,E')+h^0(C,M),$$ and
then $\deg(E)\geq h^0(C,E)$, since $h^0(C,E')\geq
h^0(C,E)-h^0(C,M)$. 
\end{proof}

Furthermore, we state separately the following result that may also
be useful for some other purpose:

\begin{proposition}\label{prop:hyperelliptic}
If equality holds in Lemma \ref{lem:clifford}, then either
$E\cong\an_C^e$, or $p_a(C)=1$, or $C$ is a hyperelliptic curve
and $E\cong\an_C^r\oplus g^1_2\oplus\dots\oplus g^1_2$, where
$r:=h^0(C,E^*)$ and $g^1_2$ denotes the pencil of degree $2$ on
$C$.
\end{proposition}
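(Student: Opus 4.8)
The plan is to refine the proof of Lemma \ref{lem:clifford}, tracking when each inequality it uses becomes an equality. By Remark \ref{rem:^*} write $E\cong\an_C^r\oplus F$ with $r=h^0(C,E^*)$ and $F$ globally generated with $h^0(C,F^*)=0$, i.e.\ $F$ ample; then equality in Lemma \ref{lem:clifford} amounts to $\deg(F)=h^0(C,F)$. If $\rank(F)=0$ then $E\cong\an_C^e$; otherwise $F$ is ample of rank $f:=e-r\geq 1$, and it suffices to show that $p_a(C)=1$, or else $C$ is hyperelliptic and $F$ is a direct sum of $f$ copies of $g^1_2$. I argue by induction on $f$. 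For $f=1$, $F$ is an ample line bundle with $\deg(F)=h^0(C,F)$: if $h^1(C,F)=0$, Riemann--Roch forces $p_a(C)=1$; if $h^1(C,F)\neq 0$, the Clifford-type bound \cite[Theorem A]{eks} gives $h^0(C,F)=\deg(F)\leq 2$, and (a base point of $|F|$ would force $C\cong\p^1$, against $p_a(C)\geq 1$) $F$ defines a degree-$2$ morphism $C\to\p^1$, so $C$ is hyperelliptic and $F=g^1_2$.

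For $f\geq 2$ I run the inductive step of the proof of Lemma \ref{lem:clifford}: a general section $s\in H^0(C,F(-p))$, $p$ general, gives $0\to M\to F\to F'\to 0$ with $M$ a line bundle of positive degree and $F'$ ample of rank $f-1$, and the equality $\deg(F)=h^0(C,F)$ forces $\deg(M)=h^0(C,M)$, $\deg(F')=h^0(C,F')$, and the surjectivity of $H^0(C,F)\to H^0(C,F')$ all at once. Assume $p_a(C)\geq 2$ (otherwise we are done). Then the case $f=1$ applied to $M$ gives $\deg(M)=h^0(C,M)\in\{1,2\}$, with $M\cong g^1_2$ if the value is $2$ and $M\cong\an_C(p)$ if it is $1$, while the induction hypothesis applied to the ample bundle $F'$ (which is neither trivial nor living on a genus-$1$ curve) gives $F'\cong g^1_2\oplus\dots\oplus g^1_2$ ($f-1$ copies). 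The crux is then to prove that the extension $0\to M\to F\to F'\to 0$ splits: granting this, $\deg(M)=1$ is impossible, since it would exhibit $\an_C(p)$ as a direct summand — hence a quotient — of the globally generated bundle $F$, while $\an_C(p)$ is not globally generated; hence $M\cong g^1_2$ and $F\cong g^1_2\oplus\dots\oplus g^1_2$.

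For the splitting, note that $\operatorname{Ext}^1(F',M)\cong H^1\bigl(C,(M\otimes(g^1_2)^{-1})^{\oplus(f-1)}\bigr)$, and that the surjectivity of $H^0(C,F)\to H^0(C,F')$ says precisely that each of the $f-1$ components $\epsilon_j\in H^1(C,M\otimes(g^1_2)^{-1})$ lies in $\ker\bigl(\cup\sigma\colon H^1(C,M\otimes(g^1_2)^{-1})\to H^1(C,M)\bigr)$ for every $\sigma\in H^0(C,g^1_2)$. By Serre duality with the dualizing sheaf $\omega_C$ — available for any reduced projective curve — the annihilator of this kernel is $\sigma\cdot H^0(C,M^{-1}\otimes\omega_C)$ inside $H^0(C,M^{-1}\otimes g^1_2\otimes\omega_C)$, and a Riemann--Roch computation gives $h^0(C,M^{-1}\otimes\omega_C)=p_a(C)-1$ and $h^0(C,M^{-1}\otimes g^1_2\otimes\omega_C)=p_a(C)$ in both cases $\deg(M)\in\{1,2\}$. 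Choosing a basis $\sigma_0,\sigma_1$ of $H^0(C,g^1_2)$, it then suffices to know that $\sigma_0 V\neq\sigma_1 V$ for $V:=H^0(C,M^{-1}\otimes\omega_C)\neq 0$; but $\sigma_0 V=\sigma_1 V$ would make $V$ a nonzero finite-dimensional $k$-subspace of $k(C)$ stable under multiplication by the non-constant function $\sigma_0/\sigma_1$, which is impossible. Hence $\epsilon_j=0$ and the extension splits.

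The main obstacle is this last step: unlike the inequality of Lemma \ref{lem:clifford}, its equality case is rigid, and one must rule out nontrivial (self-)extensions using the surjectivity of $H^0(C,F)\to H^0(C,F')$ together with honest control of cup products on $C$. The fact that $C$ may be singular or non-Gorenstein is absorbed by working systematically with the dualizing sheaf $\omega_C$, the singular forms of Riemann--Roch and of Clifford's bound being available from \cite{f} and \cite{eks}; the rest is bookkeeping.
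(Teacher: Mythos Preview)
Your argument is correct and follows the same inductive skeleton as the paper --- reduce to the ample case via Remark \ref{rem:^*}, handle rank one by Riemann--Roch versus Clifford, and in the inductive step use the exact sequence $0\to M\to F\to F'\to 0$ from Lemma \ref{lem:clifford}, observing that equality propagates to $M$ and $F'$. The substantive difference is in how the extension is shown to split.

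The paper first asserts $M\cong g^1_2$ directly from \cite[Theorem A]{eks}, then argues by contradiction: a nontrivial class in $\operatorname{Ext}^1((g^1_2)^{e-1},g^1_2)\cong H^1(C,\an_C)^{e-1}$ projects to a nontrivial rank-two extension $F$ of $g^1_2$ by $g^1_2$, and a diagram chase with a well-chosen $D\in|g^1_2|$ shows $h^0(C,F)<4$, contradicting $h^0(C,E)=2e$. Your route instead exploits the extra information that equality forces $H^0(C,F)\to H^0(C,F')$ to be surjective, i.e.\ the connecting map vanishes; you then kill the extension class by Serre duality and a dimension count, via what is essentially the base-point-free pencil trick for $|g^1_2|$ acting on $H^0(C,M^{-1}\otimes\omega_C)$. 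This is cleaner and more uniform: it treats $\deg(M)\in\{1,2\}$ simultaneously and only afterwards rules out $\deg(M)=1$ (your observation that $\an_C(p)$ would then be a globally generated quotient is a nice touch --- the paper's appeal to \cite{eks} at that point is rather compressed). The paper's approach, on the other hand, is more elementary in that it avoids any explicit Serre duality or cup-product formalism, trading this for a concrete divisor choice. Both are valid; yours scales better and is arguably more transparent about \emph{why} the extension splits.
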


\begin{proof} According to Remark \ref{rem:^*}, we can assume
$h^0(C,E^*)=0$. We proceed by induction on $e$. Assume $e=1$. If
$h^1(C,E)=0$ then $p_a(C)=1$ by Riemann-Roch's theorem. Suppose
now $h^1(C,E)\neq 0$. Then $h^0(C,E)=\deg(E)\geq 2(h^0(C,E)-1)$ by
\cite[Theorem A]{eks}, whence $h^0(C,E)=2$ and equality holds.
Therefore $C$ is hyperelliptic and $E\cong g^1_2$. If $e\geq 2$
and $\deg(E)=h^0(C,E)$, repeating the argument of Lemma
\ref{lem:clifford}, it follows that necessarily $\deg(M)=h^0(C,M)$
and $\deg(E')=h^0(C,E')$. The induction hypothesis yields that
either $p_a(C)=1$, or $C$ is a hyperelliptic curve and $E'\cong
g^1_2\oplus\dots\oplus g^1_2$. Moreover, in the latter case
\cite[Theorem A]{eks} implies $M\cong g^1_2$. Therefore $E$ is an
extension of $g^1_2\oplus\dots\oplus g^1_2$ by $g^1_2$, and
$h^0(C,E)=2e$. Let us show that $E\cong g^1_2\oplus\dots\oplus
g^1_2$. To the contrary, assume that $E$ corresponds to a non-trivial
extension $\xi\in Ext^1(g^1_2\oplus\dots\oplus g^1_2,g^1_2)\cong
Ext^1(\an_C^{e-1},\an_C)\cong H^1(C,\an_C)^{e-1}.$ Then, there
exists also a non-trivial extension $\xi'\in Ext^1(g^1_2,g^1_2)\cong
H^1(C,\an_C)$, yielding a commutative diagram:
\[
\xymatrix{
         &                                 & 0                               \ar[d] & 0                        \ar[d] &  \\
         &                                 & \oplus ^{e-2} g^1_2  \ar@{=}[r] \ar[d] & \oplus ^{e-2} g^1_2      \ar[d] &  \\
0 \ar[r] & g^1_2         \ar[r] \ar@{=}[d] & E                        \ar[r] \ar[d] & \oplus ^{e-1} g^1_2\ar[r]\ar[d] & 0\\
0 \ar[r] & g^1_2             \ar[r] \ar[d] & F                        \ar[r] \ar[d] & g^1_2              \ar[r]\ar[d] & 0\\
         & 0                               & 0                                      & 0                               &  \\
}
\]
To get a contradiction, we prove that $h^0(C,E)<2e$. To this aim it
is enough to check that $h^0(C,F)<4$. Let $D\in g^1_2$ and consider
the corresponding commutative diagram:
\[
\xymatrix{
         & 0                        \ar[d] & 0                      \ar[d] & 0                        \ar[d] &  \\
0 \ar[r] & \an_C             \ar[r] \ar[d] & F(-D)           \ar[r] \ar[d] & \an_C             \ar[r] \ar[d] & 0\\
0 \ar[r] & g^1_2             \ar[r] \ar[d] & F               \ar[r] \ar[d] & g^1_2             \ar[r] \ar[d] & 0\\
0 \ar[r] & g^1_2\otimes\an_D \ar[r] \ar[d] & F\otimes\an_D   \ar[r] \ar[d] & g^1_2\otimes\an_D \ar[r] \ar[d] & 0\\
         & 0                               & 0                             & 0                               &  \\
}
\]
Looking at the long exact sequences of the first two rows we obtain
the following square $\alpha,\beta,\gamma,\delta$:
\[
\xymatrix{
0 \ar[r] & H^0(C,\an_C) \ar[r] \ar[d] & H^0(C,F(-D)) \ar[r] \ar[d] & H^0(C,\an_C) \ar[r]^{\alpha} \ar[d]^{\gamma} & H^1(C,\an_C)\ar[d]^{\delta} \\
0 \ar[r] & H^0(C,g^1_2)        \ar[r] & H^0(C,F)         \ar[r]        & H^0(C,g^1_2) \ar[r]^{\beta}                  & H^1(C,g^1_2)                \\
}
\]
Then $\alpha(1)=\xi'\neq 0$ and $\delta(\xi')\neq 0$ for a suitable
choice of $D\in g^1_2$, whence $\beta$ is not identically zero.
Therefore $h^0(C,F)<4$ and we get a contradiction. 
\end{proof}

We now obtain our main result:

\begin{proof}[Proof of Theorem \ref{thm:main}]
Using the notation in Corollary \ref{cor:curve}, assume first
that $p_a(C)\geq 1$. From Lemma \ref{lem:clifford}, Remark
\ref{rem:^*} and Corollary \ref{cor:curve} we get:
$$\deg(E)=\deg(E_C)\geq h^0(C,E_C)-h^0(C,E^*_C)\geq h^0(C,E_C)-e\geq h^0(X,E)-e.$$ Moreover, $\deg(E) > h^0(X,E)-e$, since $h^0(C,E^*_C)=e$ implies that
$E_C$ is trivial, so $\deg(E)=0$. Assume now $p_a(C)=0$, so $C\cong\p^1$. Corollary \ref{cor:curve}
and Riemann-Roch's theorem give: $$h^0(X,E)\leq
h^0(C,E_C)=\deg(E_C)+e=\deg(E)+e,$$ proving the first part.

We have seen that if $\deg(E)=h^0(X,E)-e$, then necessarily
$C\cong\p^1$ and $h^0(X,E)=h^0(C,E_C)$. Consider the morphism
$\varphi:X\to\g(k,N)$ given by $E$, where $k=e-1$ and
$N=h^0(X,E)-1$ (cf. Section \ref{section:grass}). Let
$\g(k,N)\subset\p^M$ denote the Pl\"ucker embedding. As $E_C$ is
spanned, there is an exact sequence:
\[
\xymatrix {0\ar[r] & \an^{e-1}_C \ar[r] & E_C \ar[r] &
L_C\ar[r]&0.}
\]
Taking cohomology we deduce that the canonical map $\bigwedge^e
H^0(C,E_C)\to H^0(C,L_C)$ is surjective. Moreover, as we have
seen, the restriction map $H^0(X,E)\to H^0(C,E_C)$ is an
isomorphism. It follows that the composition of the restriction
$\varphi_{|C}$ and the Pl\"ucker embedding is given by the
complete linear system $|L_C|$. So, $\varphi(C)\subset\p^M$ is a
rational normal curve in its linear span. We have
$\deg(L_C)=\deg(E_C)=h^0(C,E_C)-e=N-k$, whence the linear span of
$\varphi(C)\subset\p^M$ is a $\p^{N-k}$ and therefore the linear
span of $\varphi(X)\subset\p^M$ is a $\p^{N-k+n-1}$. Thus
$h^0(X,L)\geq N-k+n=\deg(E)+n=\deg(L)+n$. So we get
$0\geq\deg(L)+n-h^0(X,L)\geq 0,$ and therefore
$\deg(L)=h^0(X,L)-n$. In this case, the corresponding map
$\varphi_{|L|}:X\to X'\subset\p^{h^0(X,L)-1}$ is a birational
morphism onto a variety of minimal degree, so $X$ is rational.
\end{proof}


We proceed with the proof of Corollary \ref{cor:main}:

\begin{proof}[Proof of Corollary \ref{cor:main}]
Corollary \ref{cor:curve} yields $h^0(X,E^*)=h^0(C,E^*_C)$, since
we assume $h^0(X,E)>e+1$ whenever $n\geq 2$. Let us prove (i). It
follows from Corollary \ref{cor:curve} that
$$\deg(E_C)=\deg(E)<h^0(X,E)-h^0(X,E^*)\leq
h^0(C,E_C)-h^0(C,E^*_C).$$ Then $p_a(C)=0$ by Lemma
\ref{lem:clifford}, and so $C\cong\p^1$. If, moreover, $X$ is
normal then it easily follows from \cite[Theorem 2]{mum} that
$\deg(L)=h^0(X,L)-n$. Hence $X$ is rational as before. We now
prove (ii). If $\deg(E)=h^0(X,E)-h^0(X,E^*)$ then $\deg(E_C)\leq
h^0(C,E_C)-h^0(C,E^*_C)$, and the thesis follows from Lemma
\ref{lem:clifford} and Proposition \ref{prop:hyperelliptic}. 
\end{proof}

We remark that the assumption $h^0(X,E)>e+1$ in Corollary
\ref{cor:main} cannot be dropped (cf. Remark \ref{rem:p^k+1}).

\section{A degree bound for subvarieties of
Grassmannians}\label{section:grass}

We first recall the well-known correspondence between globally
generated vector bundles and maps to Grassmannians (see, for
instance, \cite{gh} p. 207).

If $E$ is a globally generated vector bundle of rank $e=k+1$ over
an algebraic variety $X$ and $V\subset H^0(X,E)$ is a linear
subspace of dimension $N+1$, any epimorphism $V\otimes\an_X\to
E\to 0$ determines a morphism $\varphi:X\to\g(k,N)$ such that
$\varphi(X)$ is not contained in any $\g(k,N-1)$. Conversely,
every morphism $\varphi:X\to\g(k,N)$ such that $\varphi(X)$ is not
contained in any $\g(k,N-1)$ corresponds to a globally generated
vector bundle $E$ of rank $k+1$ on $X$ and an epimorphism
$V\otimes\an_X\to E\to 0$. In both cases $E\cong\varphi^*(Q_W)$,
where $Q$ denotes the universal quotient bundle of rank $k+1$ on
$\g(k,N)$ and $Q_W$ denotes its restriction to
$W:=\varphi(X)\subset\g(k,N)$. We simply denote $Q_X$ if $\varphi$
is the inclusion. In particular, for $e=1$ we recover the
correspondence between globally generated line bundles and maps to
projective spaces. Moreover, if $\g(k,N)\subset\p^M$ denotes the
Pl\"ucker embedding of the Grassmannian then its restriction
$W\subset\p^M$ is given by a base point free linear system
contained in $|\an_W(1)|$, where $\an_W(1):=\det(Q_W)$.

\begin{remark}\label{rem:omega}
Let $\Omega(s,N)\subset\g(k,N)$ denote the Schubert variety of
$k$-planes containing a linear subspace $\p^s\subset\p^N$. Then,
$X\subset\g(k,N)$ is contained in a $\Omega(s,N)$ if and only if
$Q_X$ has a trivial quotient $\an_X^{s+1}$, and this happens if and
only if $E\cong\an_X^{s+1}\oplus F$, where $F$ is a globally
generated vector bundle on $X$ of rank $k-s$ (cf. Remark
\ref{rem:^*}).
\end{remark}

Theorem \ref{thm:main} allows us to extend Del Pezzo-Bertini's
theorem on varieties of minimal degree to subvarieties of
Grassmannians:

\begin{proof}[Proof of Corollary \ref{cor:grass}]
We deduce from Remark \ref{rem:omega} and Theorem \ref{thm:main}
that $$\deg(X)=\deg(Q_X)\geq h^0(X,Q_X)-e,$$ so $\deg(X)\geq N-k$
since $h^0(X,Q_X)\geq N+1$ and $e=k+1$. Furthermore, if
$\deg(X)=N-k$ the linear span of $X\subset\p^M$ is a $\p^{N-k+n-1}$,
as we concluded at the end of the proof of Theorem \ref{thm:main}.
Therefore $X\subset\p^M$ is a variety of minimal degree in its
linear span. 
\end{proof}

\begin{remark}\label{rem:p^N-k}
If $X\subset\g(k,N)$ is a non-degenerate embedding contained in a
Schubert variety $\Omega(k-1,N)$ or, equivalently,
$Q_X\cong\an_X^k\oplus\an_X(1)$, then
$X\subset\Omega(k-1,N)=\p^{N-k}\subset\p^M$ is
a non-degenerate embedding of $X$ in $\p^{N-k}$, and hence the bound
$\deg(X)\geq N-k+1-n$ cannot be improved.
\end{remark}

\begin{example}
Let us show some examples of subvarieties $X\subset\g(k,N)$ of
degree $\deg(X)=N-k$ and not contained in any $\Omega(k-1,N)$:
\begin{enumerate}
\item[(i)] Let $X=\p^n$. Consider the embedding $X=\g(n-1,n)$
given by $E=T_{\p^n}(-1)$.

\item[(ii)] Consider $X=\g(1,3)$, which is a hyperquadric in $\p^5$ by the Pl\"ucker embedding.

\item[(iii)] Let $X=\p^2$. Consider the embedding
$X\subset\g(1,5)$ given by $E=\an_{\p^2}(1)\oplus\an_{\p^2}(1)$. The
Pl\"ucker embedding of $X$ is a Veronese surface in $\p^5$.

\item[(iv)] Consider a Schubert variety $X=\Omega(1,N)\subset\g(1,N)$. The Pl\"ucker embedding
of $X$ is a cone of vertex a point over the Segre embedding
$\p^1\times\p^{N-2}$, so $\deg(X)=N-1$.
\end{enumerate}
\end{example}

We proceed with the proof of Corollary \ref{cor:N+1}:

\begin{proof}[Proof of Corollary \ref{cor:N+1}]
If $X\subset\g(k,N)$ is not contained in any $\Omega(s,N)$ then it
follows from Remark \ref{rem:omega} that $h^0(X,Q^*_X)\leq s$.
Therefore $$\deg(Q_X)=\deg(X)\leq N+1-s\leq
h^0(X,Q_X)-h^0(X,Q^*_X).$$ We deduce from Corollary \ref{cor:main}
that $p_a(C)\leq 1$ unless $C$ is hyperelliptic and
$Q_C\cong\an_C^r\oplus g^1_2\oplus\dots\oplus g^1_2$. But the
latter cannot occur since the morphism $\varphi:C\to\g(k,N)$
corresponding to $Q_C$ is not an embedding. Indeed,
$\varphi:C\to\varphi(C)\subset\p^M$ is a double covering of a
rational normal curve of degree $k+1-r$. 
\end{proof}

\begin{remark}\label{rem:p^k+1}
If $X\subset\g(k,k+1)=\p^{k+1}$ then Corollary \ref{cor:N+1} does
not hold, since the assumption $h^0(X,Q_X)>e+1$ in Corollary
\ref{cor:main} is not satisfied.
\end{remark}

\begin{example}
Some examples of Del Pezzo manifolds in $\g(k,N)$ of
degree $N+1$ and not contained in any $\Omega(0,N)$, thus on the boundary of
Corollary \ref{cor:N+1}, are given by the intersection of
$\g(1,3)\subset\p^5$ with a general hyperquadric of $\p^5$, or by $X=\g(1,4)$.
\end{example}


\begin{acknowledgement}
This research was begun while I was visiting the Mathematics
Department at Universit\`a degli Studi Roma Tre in the fall of
2006. I would like to thank Angelo F. Lopez for his warm
hospitality during the three months I spent there, as well as Mike
Roth for pointing out the simple proof of Proposition
\ref{prop:key}. Finally, I would especially like to thank the
referee for many helpful comments and suggestions.
\end{acknowledgement}


\begin{thebibliography}{}
%
%

\bibitem{b}
Bertini, E.: Introduzione alla Geometria Proiettiva degli Iperspazi.
Pisa, Spoerri (1907)

\bibitem{dp}
Del Pezzo, P.: Sulle superficie di ordine n immerse nello spazio di
n+1 dimensioni. Rend. R. Acc. delle Scienze Fisiche e Mat. di Napoli
\textbf{24} 212--216 (1885)

\bibitem{eh}
Eisenbud, D., Harris, J.: On varieties of minimal degree (a
centennial account). Algebraic geometry, Bowdoin, 1985 (Brunswick,
Maine, 1985), 3--13. Proc. Sympos. Pure Math., 46, Part 1, Amer.
Math. Soc., Providence, RI (1987)

\bibitem{eks}
Eisenbud, D., Koh, J., Stillman, M.: Determinantal equations for
curves of high degree. Appendix (with J. Harris). Amer. J. Math.
\textbf{110}(3), 513--539 (1988)

\bibitem{f}
Fulton, W.: Intersection theory. Ergebnisse der Mathematik und ihrer
Grenzgebiete (3), 2. Springer-Verlag, Berlin (1984)

\bibitem{gh}
Griffiths, Ph., Harris, J.: Principles of algebraic geometry. Pure
and Applied Mathematics. Wiley-Interscience [John Wiley \& Sons],
New York (1978)

\bibitem{ion}
Ionescu, P.: On manifolds of small degree. arXiv:math/0306205 (2003)

\bibitem{jou}
Jouanolou, J.P.: Th\'eor\`emes de Bertini et applications. Progress in Mathematics, 42. Birkh\"auser Boston, Inc., Boston, MA (1983)

\bibitem{mum}
Mumford, D.: Pathologies. III. Amer. J. Math. \textbf{89}, 94--104 (1967)


\end{thebibliography}
%

\end{document}